\newcommand{\R}{\mathbb{R}}
\newcommand{\Z}{\mathbb{Z}}
\DeclareMathOperator{\spanz}{span_{\mathbb{Z}}}
\DeclareMathOperator{\spanr}{span_{\mathbb{R}}}
\DeclareMathOperator{\idop}{id}
\DeclareMathOperator{\GL}{GL}
\DeclareMathOperator{\ZZ}{\mathbb{Z}}
\DeclareMathOperator{\RR}{\mathbb{R}}
\DeclareMathOperator{\NN}{\mathbb{N}}
\newtheorem{theorem}{Theorem}[section]
\newtheorem{corollary}[theorem]{Corollary}
\newtheorem{Lemma}[theorem]{Lemma}
\theoremstyle{definition}
\newtheorem{remark}[theorem]{Remark}
\newtheorem{example}[theorem]{Example}
\author[B.~Baumeister]{Barbara Baumeister}
\address{Barbara Baumeister, Universit\"at Bielefeld, Germany}
\email{b.baumeister@math.uni-bielefeld.de}
\thanks{}
\title{A note on Weyl groups and  root lattices}
\author[P.~Wegener]{Patrick Wegener}
\address{Patrick Wegener, Technische Universit\"at Kaiserslautern, Germany}
\email{wegener@mathematik.uni-kl.de}
\thanks{}
\begin{document}

\begin{abstract}
We follow the dual approach to Coxeter systems and show for Weyl groups that a  set of reflections generates  the group  if and only if the related sets of roots and coroots generate the root and
the coroot lattices, respectively.
Previously, we have proven if $(W,S)$ is a Coxeter system of finite rank $n$ with set of reflections $T$ and if $t_1, \ldots t_n \in T$ are reflections in $W$ that  generate $W$ then $P:=  \langle t_1, \ldots t_{n-1}\rangle$ is a parabolic subgroup of $(W,S)$ of rank $n-1$ \cite[Theorem~1.5]{BGRW16}. Here we show if $(W,S)$ is crystallographic  as well then all the reflections $t \in T$  such that $\langle P, t\rangle = W$ form a single orbit under  conjugation  by $P$.
\end{abstract}
\maketitle

\tableofcontents

\section{Introduction}

If a group $W$, $|W| >2$, contains a set of involutions $S$ such that $(W,S)$ is a Coxeter system then
the simple system $S$ is in general not uniquely determined by $W$. The problem to determine
all the Coxeter systems for which the possible simple systems $S$ are conjugate in $W$ is intensely studied under the name isomorphism problem for Coxeter groups and  is still open, see  \cite{Mue05} and \cite{MN16}.
Let $T:= \{ wsw^{-1} \mid w\in W, s \in S \}$ be the set of reflections of $(W,S)$. A special case of the isomorphism problem is the question which groups are strongly reflection rigid, that is for which Coxeter groups all
the simple systems that are contained in $T$ are conjugate in $W$. Recently there has been made
substantial progress (e.g. see \cite{CP10} or \cite{HP17}).

In this note we study finite Weyl groups, i.e. spherical crystallographic Coxeter systems $(W,S)$, and the related dual Coxeter systems $(W,T)$.
The dual approach, started independently by Brady and Watt~\cite{BW02} and by Bessis \cite{Be03}, is the study of Coxeter systems  via their set of reflections $T$. Clearly, it is also of interest
not only to study simple systems in $T$ but more generally minimal generating subsets
$X \subseteq T$ of $W$.  There is a $2$--$1$ map between the set of  roots $\Phi$  related to  $(W,S)$ and the set $T$ sending a root $\alpha$ to a reflection $s_\alpha$
(for the notation see the next section).   Let $X$ be a subset of $T$ and   $R = R(X)$ the related set of roots.

We  first  address the question whether it is possible to impose a natural necessary and sufficient condition on $R$ that characterises  the subsets $X$ of $T$ that
generate $W$. There is a positive answer. Let $L(R)$ be the $\Z$-span of the set of vectors $R$,  $R^{\vee}$ the set of coroots of $R$ and $W_R$ be the subgroup of $W$
 that is generated by the set of reflections $\{s_\alpha~|~\alpha \in R\}$.
We show the following.

\begin{theorem} \label{thm:GenGroupLattice}
Let $\Phi$ be a crystallographic root system of rank $n$, $R \subseteq \Phi$ non-empty and $\Phi'$ a root subsystem of $\Phi$ containing $R$. Then the following statements are equivalent:
\begin{enumerate}
\item[(a)] $\Phi' = W_R(R)$
\item[(b)] $W_{\Phi'}= W_R$
\item[(c)] $L(\Phi')= L(R)$ and  $L((\Phi')^{\vee})= L(R^\vee)$.
\end{enumerate}
\end{theorem}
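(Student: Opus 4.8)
The plan is to prove the three statements equivalent by showing $(a) \Rightarrow (b) \Rightarrow (c) \Rightarrow (a)$, exploiting the tight relationship between reflection subgroups of a Weyl group, the root subsystems they determine, and the lattices spanned by those roots and their coroots.

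For the implication $(a) \Rightarrow (b)$, I would argue as follows. Since $R \subseteq \Phi'$, every reflection $s_\alpha$ with $\alpha \in R$ lies in $W_{\Phi'}$, so immediately $W_R \leq W_{\Phi'}$. Conversely, assuming $\Phi' = W_R(R)$, I want to show every generating reflection $s_\beta$ of $W_{\Phi'}$ (with $\beta \in \Phi'$) already lies in $W_R$. The key observation is that reflections are determined by roots up to sign, and conjugation acts compatibly: for $w \in W_R$ and $\alpha \in R$ one has $w s_\alpha w^{-1} = s_{w(\alpha)}$. Hence if $\beta = w(\alpha) \in W_R(R)$ for some $w \in W_R$, $\alpha \in R$, then $s_\beta = w s_\alpha w^{-1} \in W_R$. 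Since $(a)$ says every root of $\Phi'$ arises this way, I conclude $W_{\Phi'} \leq W_R$, giving equality.

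For $(b) \Rightarrow (c)$, the point is that the group $W_{\Phi'} = W_R$ acts on the ambient space and the lattice $L(R)$ must be stabilized. I would first note that $L(R) = L(\Phi')$: since $R \subseteq \Phi'$ we have $L(R) \subseteq L(\Phi')$, and for the reverse, every root $\beta \in \Phi'$ can be written (by the classification of root subsystems, or by the theory of reflection subgroups together with $(b)$) as an integral combination of elements of $R$, because $\Phi'$ is exactly the root subsystem whose Weyl group is $W_R$, and the roots of $\Phi'$ are the $W_{\Phi'}$-images of a base, each of which lies in the $\Z$-span of $R$. The dual statement $L((\Phi')^\vee) = L(R^\vee)$ follows by the same argument applied to coroots, using that the coroot of $w(\alpha)$ is $w(\alpha^\vee)$ and that $W_R$ acts on the coroot lattice.

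For the crucial implication $(c) \Rightarrow (a)$, which I expect to be the main obstacle, I would use the lattice equalities to pin down the root subsystem. Given $L(\Phi') = L(R)$ and $L((\Phi')^\vee) = L(R^\vee)$, I need to recover $\Phi' = W_R(R)$. The strategy is to show $W_R(R)$ is itself a root subsystem of $\Phi'$ spanning the same root and coroot lattices, and then invoke a uniqueness result: a root subsystem is determined by its root lattice together with its coroot lattice inside $\Phi$. Concretely, set $\Psi := W_R(R)$; this is closed under the reflections $s_\alpha$, $\alpha \in R$, hence is a root subsystem with $W_\Psi = W_R$, and by the already-established implications $L(\Psi) = L(R) = L(\Phi')$ and $L(\Psi^\vee) = L(R^\vee) = L((\Phi')^\vee)$. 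Since $\Psi \subseteq \Phi'$ are both root subsystems of $\Phi$ with equal root lattices and equal coroot lattices, the essential step is the lemma that such a subsystem is maximal among root subsystems of $\Phi$ with that root lattice, so that $\Psi = \Phi' \cap L(R) = \Phi'$. The hard part here is verifying that the lattice data genuinely forces $\Psi = \Phi'$ rather than leaving room for extra roots of $\Phi'$ outside $\Psi$; this is where the coroot lattice condition is indispensable, since the root lattice alone does not determine a subsystem (the classical example being subsystems of the same type but different index). I would therefore expect to rely on the fact that a root $\beta \in \Phi'$ lies in $\Psi$ if and only if both $\beta \in L(R)$ and $\beta^\vee \in L(R^\vee)$, and to prove this characterization directly using the crystallographic pairing between roots and coroots.
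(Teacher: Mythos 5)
Your overall architecture is sound, and the easy directions are fine: your $(a)\Leftrightarrow(b)$ argument via $ws_\alpha w^{-1}=s_{w(\alpha)}$ works (the paper simply cites an earlier result here), and your $(b)\Rightarrow(c)$ argument is essentially the paper's, namely that every root of $\Phi'$ is a $W_{\Phi'}$-image of an element of $R$ and hence, by the crystallographic condition, an integral combination of elements of $R$, with the coroot statement obtained by passing to the dual system. You have also correctly isolated the crux of $(c)\Rightarrow(a)$: everything reduces to the claim that a root $\gamma\in\Phi'$ with $\gamma\in L(R)$ and $\gamma^\vee\in L(R^\vee)$ must already lie in $\Psi:=W_R(R)$ (equivalently, in the smallest root subsystem $\Phi''$ containing $R$), and your $B_2$-flavoured remark about why the root lattice alone cannot suffice is on target.

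The genuine gap is that you never prove that claim; you only announce that you ``would prove this characterization directly using the crystallographic pairing,'' which is a statement of intent, not an argument, and the claim is essentially the entire content of the hard direction. The paper's proof of it is a concrete minimality argument you would need to reproduce or replace: write $\gamma=\sum_{i=1}^m\beta_i$ with $\beta_i\in\Phi''$ (absorbing positive integer coefficients by repetition, negating roots as needed) with $m$ minimal, and expand
\[
(\gamma\mid\gamma)=\sum_{i=1}^m(\beta_i\mid\beta_i)+\sum_{i\neq j}(\beta_i\mid\beta_j).
\]
If $\gamma$ is a short root and $m\geq 2$, the first sum already exceeds $(\gamma\mid\gamma)$, forcing some $(\beta_i\mid\beta_j)<0$ with $\beta_i\neq-\beta_j$ (by minimality), whence $\beta_i+\beta_j\in\Phi''$ by the standard fact that two non-opposite roots at an obtuse angle sum to a root --- contradicting minimality of $m$. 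If $\gamma$ is long, one runs the same argument on the coroot side, where $\gamma^\vee$ is short; this is precisely where the hypothesis $L((\Phi')^\vee)=L(R^\vee)$ is consumed, and it cannot be dispensed with. Without this (or an equivalent) argument, your proof of $(c)\Rightarrow(a)$ is incomplete.
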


\begin{corollary}\label{Corollary}
Let $(W,S)$ be a spherical crystallographic Coxeter system with root system $\Phi$ and set of reflections $T$. Then the following holds.
\begin{itemize}
\item[(a)] A subset $X$ of $T$ generates $W$ if and only if the related set of roots $R= R(X)$ generates  the root lattice $L(\Phi)$ and
$R^\vee$  generates the coroot lattice $L(\Phi^\vee)$.
\item[(b)]
A generating subset $X$ of $T$ is a minimal generating subset if and only if
$R$ and $R^\vee$ are basis of $L(\Phi)$ and $L(\Phi^\vee)$, respectively.
\end{itemize}
\end{corollary}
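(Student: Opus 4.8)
The plan is to deduce both parts of the corollary from Theorem~\ref{thm:GenGroupLattice} applied to the ambient system, that is with $\Phi'=\Phi$ and $W=W_\Phi$. For part~(a), fix $X\subseteq T$ and let $R=R(X)$ be the associated symmetric set of roots, so that $W_R=\langle X\rangle$ under the $2$--$1$ correspondence $\alpha\mapsto s_\alpha$. If $X=\emptyset$ then neither side of the asserted equivalence holds, since $W$ is nontrivial and $L(\Phi)\neq 0$ because $\rk\Phi=n\geq 1$; hence we may assume $R\neq\emptyset$ and invoke the theorem with $\Phi'=\Phi$. Statement~(b) of the theorem then reads $W=W_\Phi=W_R$, which is exactly the assertion that $X$ generates $W$, while statement~(c) reads $L(\Phi)=L(R)$ together with $L(\Phi^\vee)=L(R^\vee)$, i.e.\ that $R$ generates the root lattice and $R^\vee$ generates the coroot lattice. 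The equivalence (b)$\Leftrightarrow$(c) is precisely part~(a).

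For part~(b) I would first record the bookkeeping. Since $s_\alpha=s_{-\alpha}$, the set $R$ breaks into $|X|$ sign-pairs; choosing one root from each pair yields a set $\bar R\subseteq L(\Phi)$ with $|\bar R|=|X|$ spanning the same sublattice as $R$, and likewise $\bar{R^\vee}$ for the coroots. Both $L(\Phi)$ and $L(\Phi^\vee)$ are free of rank $n$. The elementary input I will use is that a generating set of exactly $n$ elements of a free $\Z$-module of rank $n$ is automatically a basis, since a surjective endomorphism of $\Z^n$ is an isomorphism. Granting that every minimal reflection-generating set $X$ has $|X|=n$, part~(b) follows: if $X$ is minimal then by~(a) the $n$ vectors $\bar R$ generate $L(\Phi)$ and hence form a basis, and similarly $\bar{R^\vee}$ is a basis of $L(\Phi^\vee)$; conversely, if $\bar R$ and $\bar{R^\vee}$ are bases then $|X|=n$ and both lattice conditions of~(a) hold, so $X$ generates $W$, while any proper subset has fewer than $n$ roots and therefore cannot generate $L(\Phi)$, so $X$ is minimal.

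It thus remains to show that a minimal reflection-generating set has exactly $n$ elements, and this is the main obstacle. The lower bound $|X|\geq n$ is geometric: $\langle X\rangle$ fixes the orthogonal complement of the real span of $\{\alpha : s_\alpha\in X\}$, a subspace of dimension at least $n-|X|$, and since $W$ acts without nonzero fixed vectors this forces $|X|\geq n$. The genuine difficulty is the upper bound, namely that any generating $X$ with $|X|>n$ can be reduced. This cannot be read off from linear algebra over $\R$ alone: in type $B_2$ the roots $e_1,e_2$ span the ambient space, yet $s_{e_1},s_{e_2}$ do not generate $W$ (indeed $e_1^\vee,e_2^\vee$ generate only a proper sublattice of the coroot lattice), so the correct notion of dependence is the simultaneous root/coroot condition supplied by~(a).

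I would attack the upper bound by an exchange argument in this refined setting. Given $|X|>n$, the fact that $R$ and $R^\vee$ generate rank-$n$ lattices produces an integral dependence among the chosen roots, and the aim is to show, using that $\Phi$ contains a $\Z$-basis of $L(\Phi)$, namely a simple system, together with the coroot condition, that some $t\in X$ can be deleted while keeping both $L(R)=L(\Phi)$ and $L(R^\vee)=L(\Phi^\vee)$; by~(a) the smaller set still generates $W$, contradicting minimality. Alternatively one may invoke the classical fact that all minimal reflection-generating sets of a finite Coxeter group have cardinality equal to the rank. Arranging the deletion so that both lattice conditions survive at once is where I expect the real work to lie.
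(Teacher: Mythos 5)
Part (a) of your argument is exactly the paper's (implicit) route: take $\Phi'=\Phi$ in Theorem~\ref{thm:GenGroupLattice}, note $W_R=\langle X\rangle$, and dispose of the empty case separately. That part is correct and complete, as is the backward direction of part (b): if $R$ and $R^\vee$ are bases then the lattice conditions of (a) hold, so $X$ generates, and any proper subset has fewer than $n$ roots and cannot span a rank-$n$ lattice, so $X$ is minimal.

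The forward direction of (b) is where your proposal has a genuine gap, and you have in effect diagnosed it yourself. Everything hinges on the claim that a minimal reflection generating set has exactly $n$ elements, i.e.\ that any generating $X\subseteq T$ with $|X|>n$ contains a redundant reflection. Your primary suggestion --- extract an integral dependence among the roots and delete one while preserving both $L(R)=L(\Phi)$ and $L(R^\vee)=L(\Phi^\vee)$ --- is precisely the kind of step that fails for general lattice generating sets (your own $\{2,3\}\subseteq\Z$-type concern: a generating set of a free $\Z$-module with more than $\mathrm{rk}$ elements need not contain a generating subset of the right size), and you do not show how the root-system structure rescues it; so as written this is not a proof. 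The statement itself is true, but it is a theorem about reflection groups, not linear algebra: it is established in \cite{BGRW16} (whose results on $n$-element reflection generating sets the present paper builds on throughout) and also follows from \cite[Theorem 4.1]{Tay12}. If you replace your sketched exchange argument by a precise citation of that fact, part (b) closes; without it, the ``only if'' direction of (b) is unproven. Note also, as a minor point of bookkeeping you handled correctly, that $R$ must be read as one root per reflection for ``basis'' to be meaningful.
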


\begin{remark}
If $\Phi$ is  a non-crystallographic root system, then the $\Z$-span of $\Phi$ is either not a lattice  in the $\R$-span of $\Phi$ or it is a lattice, but $\Phi$  does not contain a basis of it.  Thus the equivalences in Theorem~\ref{thm:GenGroupLattice} do not hold for non-crystallographic root systems.
\end{remark}

Theorem~\ref{thm:GenGroupLattice} is a generalisation of \cite[Proposition 5.10 and Lemma 5.12]{BGRW16}.  There the simply laced case has been studied only.
In \cite{BGRW16}  also the concept of a quasi-Coxeter element has been introduced. An element $w \in W$ is a {\em quasi-Coxeter element}, if it has a reduced factorization into reflections
such that these reflections generate the Coxeter group $W$. Then Corollary~\ref{Corollary}  yields that  $w \in W$ is a quasi-Coxeter element if and only if
it has a reduced $T$-factorization  $w = t_1 \cdots t_n$ such that  the set of roots $R$ related to $\{t_1, \ldots ,t_n\}$ is a basis of the root lattice $L(\Phi)$ and such that the set
of coroots $R^\vee$ is a basis of the coroot lattice $L(\Phi^\vee)$.

 Theorem~\ref{thm:GenGroupLattice} should  help to clarify the notation of a  quasi-Coxeter element in the literature.
In \cite{BH16} it is an element such that the roots related to a reduced $T$-factorization of that element generate the root lattice.
Thus it is a consequence of Theorem~\ref{thm:GenGroupLattice} that quasi-Coxeter elements in the sense of \cite{BGRW16} are also quasi-Coxeter in \cite{BH16}, but not
vice versa.

In \cite[Corollary~6.10]{BGRW16} we characterized the parabolic subgroups of rank $n-1$ of a spherical dual Coxeter system $(W,T)$ of rank $n$ to be precisely those rank $n-1$ reflection subgroups that generate together with one additional reflection the whole group $W$.

\begin{theorem}\label{prop:CharParSub}\cite[Corollary~6.10]{BGRW16}
Let $(W,T)$ be a spherical dual Coxeter system of rank $n$ and $W'$ a reflection subgroup of rank $n-1$. Then $W'$ is a parabolic subgroup if and only if there exists $t \in T$ such that $\langle W', t \rangle = W$.
\end{theorem}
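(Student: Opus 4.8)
The plan is to prove the two implications separately. The substance of the reverse direction is a direct application of the earlier result \cite[Theorem~1.5]{BGRW16}, while the forward direction follows from the structure theory of parabolic subgroups of finite Coxeter groups.

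For the forward implication I would use the standard description of parabolic subgroups of a spherical Coxeter system as the pointwise stabilizers of subspaces of the reflection representation, equivalently as the conjugates of the standard parabolic subgroups $W_J = \langle J \rangle$, $J \subseteq S$, where the rank of $W_J$ equals $|J|$. Thus a parabolic subgroup $W'$ of rank $n-1$ has the form $W' = w W_J w^{-1}$ with $w \in W$ and $J = S \setminus \{s\}$ for some $s \in S$. Since $\langle W_J, s \rangle = \langle S \rangle = W$ and $T$ is closed under conjugation, the reflection $t := w s w^{-1}$ lies in $T$ and satisfies $\langle W', t \rangle = w \langle W_J, s \rangle w^{-1} = W$, which produces the desired reflection.

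For the reverse implication, assume that $W'$ is a reflection subgroup of rank $n-1$ and that some $t \in T$ satisfies $\langle W', t \rangle = W$. Being a reflection subgroup of a finite Coxeter group, $W'$ is itself a finite reflection group acting on the span of its roots, a space of dimension $n-1$; it therefore admits a simple system consisting of $n-1$ reflections $t_1, \dots, t_{n-1}$, each of which is a reflection of the ambient group $W$ and hence lies in $T$. Then $t_1, \dots, t_{n-1}, t$ are $n$ reflections in $T$ generating $W$, so \cite[Theorem~1.5]{BGRW16} applies and shows that $W' = \langle t_1, \dots, t_{n-1} \rangle$ is a parabolic subgroup of rank $n-1$.

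Granting \cite[Theorem~1.5]{BGRW16}, which I treat as a black box, the only genuine technical point is the passage in the reverse direction from ``reflection subgroup of rank $n-1$'' to ``generated by $n-1$ reflections from $T$''; this is what makes Theorem~1.5 applicable, and it rests on the fact that a reflection subgroup of a finite Coxeter group is again a reflection group whose simple reflections are reflections of the ambient group. The main obstacle, were Theorem~1.5 not available, would be precisely this reverse direction: a reflection subgroup of full rank need not be the whole group (for instance $D_n < B_n$), so one cannot conclude by a dimension count alone, and some control over how $W'$ sits inside $W$ relative to $t$ is indispensable.
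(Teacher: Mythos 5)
Your proof is correct. Note that the paper itself gives no argument for this statement: it is imported verbatim as \cite[Corollary~6.10]{BGRW16}, so there is no in-paper proof to compare against. Your derivation is sound on both sides: the forward direction is essentially immediate from the (dual) definition of a parabolic subgroup --- a rank-$(n-1)$ parabolic is $\langle s_1,\dots,s_{n-1}\rangle$ for some simple system $\{s_1,\dots,s_n\}$, and $t=s_n$ works, so you do not even need to pass through the classical ``conjugate of a standard parabolic'' description (though that is equivalent here by \cite[Corollary~4.4]{BGRW16}); and the reverse direction correctly reduces to \cite[Theorem~1.5]{BGRW16} once you know that $(W',W'\cap T)$ is again a dual Coxeter system, which is exactly Dyer's theorem as quoted in Section~\ref{sec2}, so a rank-$(n-1)$ reflection subgroup is generated by $n-1$ elements of $T$. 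Your closing observation that full-rank reflection subgroups need not be the whole group is the right reason why the hypothesis $\langle W',t\rangle=W$ carries genuine content.
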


Since the completion of this work, it is come to our attention that
Taylor has shown the latter result already for finite complex reflection groups. His proof  uses a case-by-case argument (see \cite[Theorem 4.1]{Tay12}).

Here we show that in the Weyl group case
the reflection $t \in T$ so that $t$ and the rank-$(n-1)$ parabolic subgroup $P$ generate $W$ is unique up to $P$-conjugacy.

\begin{theorem}\label{prop:Conjugate}
	Let $(W,T)$ be a spherical crystallographic dual Coxeter system of rank $n$ and $P$ a  parabolic subgroup of $W$ of rank $n-1$.
	All the reflections $t \in T$ such that $W= \langle P, t\rangle$ form a single orbit under conjugation by $P$.
\end{theorem}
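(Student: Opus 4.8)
The plan is to convert ``$\langle P,t\rangle=W$'' into two conditions on the simple-root coordinates of a root and of its coroot, and then to show by a symmetric pairing computation that any root meeting both conditions is $P$-conjugate to one fixed simple root. First I would fix a simple system $\alpha_1,\dots,\alpha_n$ of $\Phi$ with simple reflections $s_1,\dots,s_n$, and call a reflection $t$ \emph{admissible} if $\langle P,t\rangle=W$. A parabolic subgroup of rank $n-1$ of $(W,T)$ is $W$-conjugate to a standard maximal parabolic, and since the assertion is invariant under conjugation I may assume $P=W_{\Phi_J}$ with $\Phi_J=\Phi\cap\spanr\{\alpha_j\mid j\neq i\}$ for a fixed index $i$. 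For $t=s_\alpha$ one has $\langle P,t\rangle=W_R$ with $R=\{\alpha_j\mid j\neq i\}\cup\{\alpha\}$, so Corollary~\ref{Corollary}(a) rewrites $\langle P,t\rangle=W$ as the two equalities $L(\Phi_J)+\Z\alpha=L(\Phi)$ and $L(\Phi_J^\vee)+\Z\alpha^\vee=L(\Phi^\vee)$. As the simple roots form a $\Z$-basis of $L(\Phi)$, the simple coroots a $\Z$-basis of $L(\Phi^\vee)$, and $L(\Phi_J),L(\Phi_J^\vee)$ the corresponding coordinate sublattices, these equalities say precisely that, writing $\alpha=\sum_j c_j\alpha_j$ and $\alpha^\vee=\sum_j c_j^\vee\alpha_j^\vee$, one has $c_i=\pm1$ and $c_i^\vee=\pm1$. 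Replacing $\alpha$ by $-\alpha$ (which leaves $s_\alpha$ unchanged) I may take $\alpha>0$, so $c_i=c_i^\vee=1$.

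Next I would note that $\alpha_i$ itself meets both conditions, so $s_{\alpha_i}$ is admissible; moreover the conditions $c_i=1$ and $c_i^\vee=1$ are preserved by $W_J$ (the reflections $s_j$, $j\neq i$, change neither the $\alpha_i$-coordinate of a root nor the $\alpha_i^\vee$-coordinate of its coroot), and conjugation by $P$ preserves admissibility. Hence the $P$-orbit of $s_{\alpha_i}$ consists of admissible reflections, and it remains to prove the converse. Every root with $c_i=1$ is positive and stays positive under $W_J$, so each relevant $W_J$-orbit has a unique $J$-anti-dominant element; replacing $\alpha$ by it reduces everything to the claim: \emph{if $\alpha>0$ is $J$-anti-dominant, that is $\langle\alpha,\alpha_j^\vee\rangle\le0$ for all $j\neq i$, and $c_i=c_i^\vee=1$, then $\alpha=\alpha_i$.}

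The heart of the matter, and the point where the coroot condition must genuinely enter, is then a double sandwich. Write $\alpha=\alpha_i+\sum_{j\neq i}m_j\alpha_j$ and $\alpha^\vee=\alpha_i^\vee+\sum_{j\neq i}m_j^\vee\alpha_j^\vee$ with all $m_j,m_j^\vee\ge0$. Expanding $\alpha^\vee$ in $\langle\alpha,\alpha^\vee\rangle=2$ and using $\langle\alpha,\alpha_j^\vee\rangle\le0$ gives $\langle\alpha,\alpha_i^\vee\rangle\ge2$, while expanding $\alpha$ in $\langle\alpha,\alpha_i^\vee\rangle$ and using $\langle\alpha_j,\alpha_i^\vee\rangle\le0$ gives $\langle\alpha,\alpha_i^\vee\rangle\le2$; hence $\langle\alpha,\alpha_i^\vee\rangle=2$. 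The symmetric evaluation (expand $\alpha$ in $\langle\alpha,\alpha^\vee\rangle=2$, then $\alpha^\vee$ in $\langle\alpha_i,\alpha^\vee\rangle$) yields $\langle\alpha_i,\alpha^\vee\rangle=2$. Thus the product of Cartan integers $\langle\alpha,\alpha_i^\vee\rangle\langle\alpha_i,\alpha^\vee\rangle$ equals $4$, which for non-proportional roots is impossible; as $\Phi$ is reduced and $\alpha>0$ this forces $\alpha=\alpha_i$, completing the proof.

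The step I expect to cost the most care is the first-paragraph translation: checking that a dual rank-$(n-1)$ parabolic is conjugate to a standard $W_J$, and that $L(\Phi_J)$, $L(\Phi_J^\vee)$ are exactly the coordinate sublattices, so that the lattice conditions become the clean coefficient conditions $c_i=\pm1$, $c_i^\vee=\pm1$; after that the double sandwich is short. It is instructive that the coroot condition is indispensable, which is why this genuinely extends the simply laced case of \cite{BGRW16}: for $\Phi=B_2$ with $\alpha_i$ the long simple root the three positive roots with $c_i=1$ fall into two $P$-orbits, and it is exactly the coroot condition $c_i^\vee=1$ that discards the short root and leaves a single orbit.
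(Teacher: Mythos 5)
Your proposal is correct, and it proves the statement by a genuinely different route than the paper. The paper's argument is geometric: it takes the cone cut out by $H_{\alpha_1},\dots,H_{\alpha_{n-1}},H_\beta$, repeatedly replaces $\beta$ by $s_{\alpha_j}(\beta)$ whenever $(\alpha_j\mid\beta)>0$, shows that the associated cone strictly shrinks at each step (so the process terminates, since only finitely many cones are cut out by root hyperplanes), and then invokes Lemma~\ref{Coxeter} to conclude that the terminal configuration $\{\alpha_1,\dots,\alpha_{n-1},\gamma\}$ is a simple system, forcing $\gamma=\alpha_n$. You instead use Corollary~\ref{Corollary}(a) to translate admissibility of $s_\alpha$ into the coefficient conditions $c_i=\pm1$ and $c_i^\vee=\pm1$ (which is a correct translation, since $\{\alpha_j\}_{j\ne i}$ is a $\Z$-basis of $L(\Phi_J)$ and likewise for coroots), pass to the $J$-anti-dominant representative of the $W_J$-orbit, and then pin it down to $\alpha_i$ by the two-sided estimate $\langle\alpha,\alpha_i^\vee\rangle=\langle\alpha_i,\alpha^\vee\rangle=2$, whose product $4$ forces proportionality. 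Both arguments are uniform and both yield Theorem~\ref{FundamentalConjugation} as a byproduct; yours trades the cone-termination argument and Lemma~\ref{Coxeter} for a short Cartan-integer computation, at the cost of leaning more heavily on the lattice criterion of Corollary~\ref{Corollary}, while the paper's version only needs Theorem~\ref{thm:GenGroupLattice} for the linear independence of $\Delta_P\cup\{\beta\}$. Your closing $B_2$ example correctly isolates why the coroot condition is indispensable outside the simply laced case. The only steps you state without proof --- existence (and uniqueness) of the $J$-anti-dominant orbit representative, and invariance of $c_i$ and $c_i^\vee$ under $W_J$ --- are standard and easily verified, so there is no gap.
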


In fact we show more than stated in  Theorem~\ref{prop:Conjugate}. Theorem~\ref{FundamentalConjugation} is a direct consequence of the proof of Theorem~\ref{prop:Conjugate}.

\begin{theorem}\label{FundamentalConjugation}
Let $(W,T)$ be a spherical crystallographic dual Coxeter system of rank $n$ with root system $\Phi$. 	If  $\{\alpha_1, \ldots , \alpha_n\}$ is a simple system  for $\Phi$,  $P = \langle s_{\alpha_1}, \ldots , s_{\alpha_{n-1}} \rangle$  a  parabolic subgroup and
	$W = \langle P, s_\beta\rangle$ for some $\beta \in \Phi$, then there is $w \in P$ such that $w(\beta) = \alpha_n$.
    \end{theorem}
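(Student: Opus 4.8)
The plan is to read off the two lattice conditions from Corollary~\ref{Corollary}, use them to pin down the extreme coefficients of $\beta$ and of $\beta^\vee$, pass to a distinguished representative of the orbit $P\beta$, and then extract a sharp length equality that forces that representative onto the line through $\alpha_n$. First I would record the consequences of the hypothesis. Since $W=\langle s_{\alpha_1},\dots,s_{\alpha_{n-1}},s_\beta\rangle$, Corollary~\ref{Corollary}(a) says that $R=\{\alpha_1,\dots,\alpha_{n-1},\beta\}$ generates the root lattice $L(\Phi)=\spanz(\alpha_1,\dots,\alpha_n)$ and that $R^\vee$ generates the coroot lattice $L(\Phi^\vee)$. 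Writing $\beta=\sum_{i=1}^n c_i\alpha_i$ with $c_i\in\Z$, the first condition forces $c_n=\pm1$; as $s_\beta=s_{-\beta}$ I may replace $\beta$ by $-\beta$ and assume $c_n=1$, so $\beta$ is a positive root of $\alpha_n$-coefficient $1$. The elementary observation that each $s_{\alpha_i}$ with $i<n$ alters only the $\alpha_i$-coordinate of a root shows that every element of $P$ fixes the $\alpha_n$-coordinate; hence the whole orbit $P\beta$ consists of positive roots of $\alpha_n$-coefficient $1$. Both lattice conditions are moreover $P$-invariant: for $w\in P$ one has $w(\beta)^\vee=w(\beta^\vee)$, and $w$ preserves both $L(\Phi^\vee)$ and $\spanz(\alpha_1^\vee,\dots,\alpha_{n-1}^\vee)$, so $\{\alpha_1^\vee,\dots,\alpha_{n-1}^\vee,w(\beta)^\vee\}$ again generates $L(\Phi^\vee)$.

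Next I would choose $\gamma=w(\beta)$ of minimal height in $P\beta$. Minimality gives $\langle\gamma,\alpha_i^\vee\rangle\le0$, equivalently $(\gamma,\alpha_i)\le0$, for all $i<n$, where $(\,,\,)$ is a $W$-invariant inner product and $\langle\lambda,\alpha^\vee\rangle=2(\lambda,\alpha)/(\alpha,\alpha)$. By the previous paragraph $\gamma=\alpha_n+\sum_{i<n}c_i\alpha_i$ is a positive root with all $c_i\ge0$, and $\{\alpha_1^\vee,\dots,\alpha_{n-1}^\vee,\gamma^\vee\}$ generates $L(\Phi^\vee)$. Since $\gamma^\vee$ is a positive coroot, this last condition forces $\gamma^\vee=\alpha_n^\vee+\sum_{i<n}e_i\alpha_i^\vee$ with all $e_i\ge0$. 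Thus both $\gamma$ and $\gamma^\vee$ carry coefficient exactly $1$ on $\alpha_n$, respectively $\alpha_n^\vee$.

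The heart of the argument is to evaluate $(\gamma,\gamma^\vee)=2$ in two ways. Expanding $\gamma^\vee$ gives $2=\langle\gamma,\alpha_n^\vee\rangle+\sum_{i<n}e_i\langle\gamma,\alpha_i^\vee\rangle$, where antidominance and $e_i\ge0$ make the sum nonpositive, while $\langle\gamma,\alpha_n^\vee\rangle=2+\sum_{i<n}c_i\langle\alpha_i,\alpha_n^\vee\rangle\le2$ because distinct simple roots pair nonpositively; hence $\langle\gamma,\alpha_n^\vee\rangle=2$, that is $(\gamma,\alpha_n)=(\alpha_n,\alpha_n)$. Expanding $\gamma$ instead yields symmetrically $\langle\alpha_n,\gamma^\vee\rangle=2$, that is $(\alpha_n,\gamma)=(\gamma,\gamma)$. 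Combining these, $(\gamma,\gamma)=(\gamma,\alpha_n)=(\alpha_n,\alpha_n)$, so equality holds in Cauchy--Schwarz $(\gamma,\alpha_n)^2\le(\gamma,\gamma)(\alpha_n,\alpha_n)$; therefore $\gamma$ and $\alpha_n$ are proportional, and as both have $\alpha_n$-coefficient $1$ we conclude $\gamma=\alpha_n$. Taking $w\in P$ with $w(\beta)=\gamma$ then finishes the proof.

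I expect the main obstacle to be the recognition that the root-lattice condition alone is insufficient: roots of $\alpha_n$-coefficient $1$ can split into several $P$-orbits (for instance $\alpha_1+\alpha_2$ in type $C_2$ is fixed by $P$ yet differs from $\alpha_n$), and it is precisely the coroot-lattice condition that both persists under the passage to the minimal-height representative and, paired against the root expansion under antidominance, produces the rigid length equalities. Setting up and exploiting this symmetric use of roots and coroots is the decisive point; the remaining steps are the routine verifications that $P$ fixes the $\alpha_n$-coordinate and transports the lattice hypotheses along the orbit.
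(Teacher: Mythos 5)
Your proof is correct, and while it opens with the same move as the paper -- repeatedly applying $s_{\alpha_j}$, $j<n$, to drive $\beta$ to a representative $\gamma$ of its $P$-orbit that pairs nonpositively with $\alpha_1,\dots,\alpha_{n-1}$ -- the way you finish is genuinely different. The paper runs the descent geometrically: each replacement strictly shrinks the cone cut out by $H_{\alpha_1},\dots,H_{\alpha_{n-1}},H_\beta$, so the process terminates, and the terminal set of pairwise obtuse roots is recognized as a simple system via Lemma~\ref{Coxeter} (which rests on Theorem~\ref{thm:GenGroupLattice} for linear independence and on Bourbaki's fundamental-domain theorem); $\gamma=\alpha_n$ then drops out from the identification of the resulting fundamental cone. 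You instead terminate the descent by height (legitimate, since the normalization $c_n=1$ forces the whole $P$-orbit into the positive roots) and replace the appeal to the theory of fundamental cones by a short self-contained computation: both lattice conditions of Corollary~\ref{Corollary} are $P$-invariant, they pin the $\alpha_n$- and $\alpha_n^\vee$-coefficients of $\gamma$ and $\gamma^\vee$ to $1$, and expanding $\langle\gamma,\gamma^\vee\rangle=2$ against antidominance yields $(\gamma,\gamma)=(\gamma,\alpha_n)=(\alpha_n,\alpha_n)$, whence $\gamma=\alpha_n$ by equality in Cauchy--Schwarz. What your route buys is the elimination of the Tits-cone machinery in favor of elementary lattice and inner-product arithmetic, and it makes visibly explicit \emph{where} the coroot-lattice hypothesis enters (your $C_2$ example with $\alpha_1+\alpha_2$ correctly shows the root-lattice condition alone does not suffice); what it gives up is the extra structural output of the paper's argument, namely that $\{\alpha_1,\dots,\alpha_{n-1},\gamma\}$ is exhibited as a simple system along the way, which is the content the paper reuses elsewhere.
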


 \begin{remark}
 \begin{itemize}
\item[(a)]  Our first proof of Theorem~\ref{prop:Conjugate} was case-by-case.
Here we follow an idea by Vinberg \cite{Vi17} of a uniform proof of Theorem~\ref{prop:Conjugate}.
\item[(b)]
Observe that this theorem does not hold in general
if we remove the condition that $(W,T)$ is crystallographic, see the next example.
\end{itemize}
\end{remark}

\begin{example}
Consider a Coxeter system $(W,S)$ of type $H_3$. Let $S=\{ s_1, s_2, s_3 \}$ be such that $s_1$ and $s_3$ commute and  such that $s_1s_2$ and  $s_2s_3$ are of order  $5$ and $3$, respectively. Then for the parabolic subgroup $P:= \langle s_1, s_3 \rangle$ we obtain
$$
\langle P, s_2 \rangle = W = \langle  P, s_2s_1s_2 \rangle.
$$
But $s_2$ and $s_2s_1s_2$ are not conjugated under $P$.
\end{example}

Notice that the results presented here are used by the second named author to
show that the Hurwitz action on the set of reduced  factorizations of a quasi--Coxeter element is transitive in  dual affine Coxeter systems ~\cite{We17}.

The organization of this note is as follows. In Section \ref{sec2} we give all the necessary definitions. In Section \ref{Character} we prove Theorem \ref{thm:GenGroupLattice}; and in Section \ref{sec4} we provide a criterion for a set of roots to be a simple system and prove Theorem \ref{prop:Conjugate}.
\bigskip\\
{\bf Acknowledgement}   We like to thank Professor Ernest Vinberg for fruitful discussions with him as well as for explaining to us his idea of a uniform proof of Theorem~\ref{prop:Conjugate}.
 We also wish to thank the anonymous referee for helpful comments, and for making us aware of \cite{Tay12}.

\section{Spherical crystallographic dual Coxeter systems} \label{sec2}

 Let $(W,S)$ be a spherical Coxeter system of rank $n$, that is $W$ is a finite group and $S\subseteq W$ a set of involutions such that for $s,t \in S$ there exists $m(s,t) \in \NN$ with
 $m(s,s) = 1$, $m(s,t) = m(t,s)$ and $m(s,t) \geq 2$
 if $s \neq t$ so that  $W$ has the following presentation
 $$W = \langle S~|~ (st)^{m(s,t)} = 1,~s, t \in S\rangle.$$
 Let  $V$ be an $\R$-vector space with euclidean product $(\cdot \mid \cdot )$ and
   let $\alpha \in V$, $\alpha \neq 0$.  The reflection $s_\alpha$ is the orthogonal  transformation that fixes the hyperplane $H_\alpha:= \alpha^\bot $ pointwise and sends $\alpha$ to
 $-\alpha$, i.e.
 $$s_{\alpha} (v) = v -\frac{2(\alpha \mid v)}{(\alpha \mid \alpha)}\alpha$$ for all $v \in V$, see \cite{Hum90} or \cite{Bou02}.
There is a geometric representation  $\varphi: W \rightarrow \GL(V)$  of $W$  that sends $s \in S$  to the reflection
 $s_{\alpha_s}$ for some  vector $\alpha_s$ and that is faithful.
We identify $W$ with its image $\varphi(W)$ in $\GL(V)$.

 It follows that   $W$ leaves the form $(\cdot \mid \cdot)$ invariant, and that we may assume that $\{\alpha_s~|~s \in S\}$ generates the vector space $V$.
  Then  $\Phi:= \{w(\alpha_s)~|~s \in S, w \in W\}\subset V$ is a so called {\em root system } related
 to $(W,S)$, i.e. $\Phi$ is a finite set of nonzero vectors in $V$ such that
\begin{enumerate}
\item[(1)] $\spanr(\Phi) =V,$
\item[(2)] $s_{\alpha} ( \Phi ) = \Phi$ for all $\alpha \in \Phi$ and
\item[(3)] $\Phi \cap \RR \alpha = \{ \pm \alpha \}$ for all $\alpha \in \Phi.$
\end{enumerate}
Further, it follows that $W = W_\Phi:= \langle s_\alpha~|~\alpha \in \Phi\rangle$.

In this note we always assume that $\Phi$ is {\em crystallographic}, i.e.  $\frac{2(\alpha \mid \beta)}{(\alpha \mid \alpha)}$ is an integer for
 all $\alpha, \beta \in \Phi$. In this case the Coxeter system is also said to be crystallographic.
 
 For an irreducible crystallographic root system $\Phi$, the set 
$\{ (\alpha \mid \alpha) \mid \alpha \in \Phi \}$ has at most two elements (see \cite[Section 2.9]{Hum90}). If this set has only one element, we call $\Phi$ {\em simply laced}.

 Each root system $\Phi$ contains a so-called {\em simple system} $\Delta \subset \Phi$. That is a basis of $V$ such that the expression of each element in $\Phi$ in that basis has either all coefficients non-negative or
 non-positive integers.  In each simple system every two different roots have an obtuse dihedral angle \cite[Corollary 1.3]{Hum90}.
 Coxeter observed that this characterises the simple systems  in irreducible crystallographic root systems, see Lemma~\ref{Coxeter}.

 The simple systems have a second geometric description.
 The connected components of $V\setminus{\cup_{\alpha \in \Phi} H_\alpha}$ are open cones and are called {\em fundamental cones}. The hyperplanes bounding a fundamental cone are called
 {\em walls} of the fundamental cone.
 If   $\Delta = \{\alpha_1, \ldots, \alpha_n\} \subseteq \Phi$ is a simple system, then the hyperplanes $H_{\alpha_i}$, $1 \leq i \leq n$, are the walls of a fundamental cone
 $C  = \{x \in V~|~ (\alpha_i \mid x) > 0 $ for $1 \leq i \leq n\}$. In this way we get another characterization of the simple systems of $\Phi$. They are precisely the sets of roots for which there exists a fundamental cone such that each root is  orthogonal to one of the walls and  on the same side of the wall as the fundamental cone.

In this note we follow the dual approach to Coxeter systems.
A pair $(W,T)$ consisting of a group $W$ and a generating subset $T$ of $W$ is called {\em dual Coxeter system} of finite rank $n$ if there is a subset $S \subseteq T$ with $|S| = n$ such that
$(W,S)$ is a Coxeter system, and $T = \big\{ wsw^{-1} \mid w \in W, s \in S \big\}$
is the set of reflections for the Coxeter system $(W,S)$.
We then call~$(W,S)$ a {\em simple system} for $(W,T)$. Let $(W,T)$ be a dual Coxeter system.
If $S'\subseteq T$ is such that $(W,S')$ is a Coxeter system, then $\big\{ wsw^{-1} \mid w \in W, s \in S' \big\}=T$, see \cite[Lemma 3.7]{BMMN02}. Hence a set $S'\subseteq T$ is a simple system for $(W,T)$ if and only if $(W,S')$ is a Coxeter system. Note that the rank of $(W,T)$ is well-defined by \cite[Theorem 3.8]{BMMN02}. \\

Let $W'$ be a reflection subgroup of $W$, i.e. a subgroup of $W$ that is generated by reflections.
By \cite{Dye90} $(W', W' \cap T)$ is again a dual Coxeter system. The reflection subgroup generated by $\{s_1,\ldots,s_m\} \subseteq T$ is called a {\em parabolic subgroup} for $(W,T)$ if there is a simple system~$S = \{ s_1,\ldots,s_n \}$ for $(W,T)$ with $m \leq n$. This definition differs from the usual notion of a parabolic subgroup generated by a conjugate of a subset of a fixed simple system $S$ (see \cite[Section 1.10]{Hum90}). However, in \cite[Corollary 4.4]{BGRW16} it is shown that both definitions coincide for finite Coxeter groups.

An element $c\in W$ is called a {\em Coxeter element} if there exists a simple system $S=\{s_1,\dots, s_n\}$ for $(W,T)$ such that $c=s_1\cdots s_n$. An element $w \in W$ is called {\em quasi-Coxeter element} for $(W,T)$
if there exists a $T$-reduced factorization $w=t_1  \cdots  t_n$ such that $\langle t_1 , \ldots , t_n \rangle = W$. In particular,
every Coxeter element is a quasi-Coxeter element.

  Let as usual $\Phi^\vee$ be the set of coroots
  $$\alpha^\vee:= \frac{2\alpha}{(\alpha| \alpha)} ~\mbox{where}~\alpha \in \Phi.$$
 For a set of roots $R \subseteq \Phi$ we define $L(R):=\spanz(R)$ and $L(R^{\vee}):=\spanz(R^{\vee})$. We call $L(\Phi)$ and $L(\Phi^{\vee})$ the {\em root lattice} and the {\em coroot lattice}, respectively.

  Later we will need  the following two facts.

  \begin{Lemma}
  \cite[2.9]{Hum90} Let $\Phi$ be an irreducible and crystallographic root system of rank $n \geq 2$ and $\Delta:= \{ \alpha_1 , \ldots, \alpha_n\}$ be a simple system for $\Phi$. Then $\Delta^{\vee}= \{ \alpha_1^{\vee}, \ldots, \alpha_n^{\vee} \}$ is a simple system for $\Phi^{\vee}$ (and in particular a basis for $L(\Phi^{\vee})$).
  \end{Lemma}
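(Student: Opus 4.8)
The plan is to exploit the fact that passing from roots to coroots leaves all the reflecting hyperplanes unchanged, and then to feed this into the geometric description of simple systems recalled above (simple systems are precisely the sets of roots for which there is a fundamental cone such that each root is orthogonal to one of its walls and on the same side of that wall as the cone). This sidesteps any direct computation with coefficients, since the integrality built into the notion of simple system is then obtained for free.

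First I would check that $\Phi^\vee$ is itself a crystallographic root system in $V$. Axioms (1) and (3) are immediate, because each $\alpha^\vee=\frac{2\alpha}{(\alpha\mid\alpha)}$ is a positive multiple of $\alpha$, so $\spanr(\Phi^\vee)=\spanr(\Phi)=V$ and $\Phi^\vee\cap\R\alpha^\vee=\{\pm\alpha^\vee\}$. The only point requiring care is the stability axiom (2). Here the key observations are that a reflection depends only on the direction of its defining vector, so $s_{\alpha^\vee}=s_\alpha$, and that the $\vee$-operation intertwines the $W$-action: since every $w\in W$ preserves $(\cdot\mid\cdot)$ one has $(w\beta\mid w\beta)=(\beta\mid\beta)$ and hence $(w\beta)^\vee=w(\beta^\vee)$. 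Applying this with $w=s_\alpha$ gives $s_\alpha(\Phi^\vee)=(s_\alpha\Phi)^\vee=\Phi^\vee$. A short computation $\tfrac{2(\alpha^\vee\mid\beta^\vee)}{(\alpha^\vee\mid\alpha^\vee)}=\tfrac{2(\alpha\mid\beta)}{(\beta\mid\beta)}\in\Z$ then shows $\Phi^\vee$ is crystallographic, which is all that is needed for the final ``basis'' assertion.

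The heart of the argument is the identity $H_{\alpha^\vee}=(\alpha^\vee)^\bot=\alpha^\bot=H_\alpha$, valid because $\alpha^\vee\in\R_{>0}\,\alpha$. Thus $\Phi$ and $\Phi^\vee$ determine exactly the same family of reflecting hyperplanes, and therefore the same fundamental cones and walls. Let $C=\{x\in V\mid (\alpha_i\mid x)>0,\ 1\le i\le n\}$ be the fundamental cone attached to $\Delta$, with walls $H_{\alpha_1},\ldots,H_{\alpha_n}$. For each $i$ the coroot $\alpha_i^\vee$ is orthogonal to the wall $H_{\alpha_i^\vee}=H_{\alpha_i}$ of $C$, and it lies on the same side of that wall as $C$, since $(\alpha_i^\vee\mid x)=\frac{2}{(\alpha_i\mid\alpha_i)}(\alpha_i\mid x)>0$ for every $x\in C$.

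Finally I would apply the geometric characterization of simple systems recalled above, now to the root system $\Phi^\vee$ together with the cone $C$: it identifies $\Delta^\vee=\{\alpha_1^\vee,\ldots,\alpha_n^\vee\}$ as precisely the simple system of $\Phi^\vee$ associated with $C$. Since $\Phi^\vee$ is crystallographic, a simple system is a $\Z$-basis of its root lattice, so $\Delta^\vee$ is in particular a basis of $L(\Phi^\vee)$. I expect the only genuinely delicate point to be the bookkeeping in the second step, namely verifying that the $\vee$-operation intertwines the orthogonal $W$-action so that $\Phi^\vee$ is really stable under its own reflections; once the hyperplane identity $H_{\alpha^\vee}=H_\alpha$ is established, the transfer of the chamber description is automatic and no coefficient-by-coefficient integrality computation is required.
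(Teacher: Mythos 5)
Your argument is correct. Note, however, that the paper does not prove this lemma at all: it is quoted verbatim from Humphreys \cite[2.9]{Hum90}, so there is no in-paper proof to compare against. Your chamber-theoretic route is essentially the standard textbook argument: since $\alpha^\vee \in \R_{>0}\alpha$, the coroot system $\Phi^\vee$ has exactly the same reflecting hyperplanes, hence the same fundamental cones and walls, as $\Phi$; the cone $C$ attached to $\Delta$ then exhibits $\Delta^\vee$ as the set of roots of $\Phi^\vee$ orthogonal to its walls and on its positive side, which is the geometric characterization of simple systems recalled in Section 2 of the paper. Your preliminary verifications are all sound: the intertwining $(w\beta)^\vee = w(\beta^\vee)$ for orthogonal $w$ gives axiom (2) for $\Phi^\vee$, the computation $\tfrac{2(\alpha^\vee\mid\beta^\vee)}{(\alpha^\vee\mid\alpha^\vee)} = \tfrac{2(\alpha\mid\beta)}{(\beta\mid\beta)} \in \Z$ gives crystallography, and crystallography of $\Phi^\vee$ is what licenses the final step that a simple system is a $\Z$-basis of $L(\Phi^\vee)$. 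The one point worth making explicit is that the geometric characterization you invoke requires the candidate set to consist of \emph{one} root per wall of the cone; this holds here because $|\Delta^\vee| = n$, the walls $H_{\alpha_1},\dots,H_{\alpha_n}$ are distinct, and axiom (3) for $\Phi^\vee$ (inherited from axiom (3) for $\Phi$) guarantees exactly one coroot per wall on the positive side. With that observation the proof is complete.
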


  \begin{Lemma} \label{le:SumOfRoots}
  \cite[Ch. VI, 3, Cor. to Theorem 1]{Bou02} Let $\Phi$ be a crystallographic root system and $\alpha, \beta \in \Phi$ with $(\alpha \mid \beta)<0$ and $\alpha \neq - \beta$. Then $\alpha + \beta \in \Phi$.
  \end{Lemma}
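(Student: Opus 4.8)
The plan is to reduce everything to the two integers
$a := \frac{2(\alpha \mid \beta)}{(\alpha \mid \alpha)}$ and
$b := \frac{2(\alpha \mid \beta)}{(\beta \mid \beta)}$,
which are integers by the crystallographic hypothesis and are both strictly negative because $(\alpha \mid \beta) < 0$. First I would record the two reflection identities $s_\alpha(\beta) = \beta - a\alpha$ and $s_\beta(\alpha) = \alpha - b\beta$. Since $W$ preserves $\Phi$ by axiom (2), whenever $a = -1$ we get $\alpha + \beta = s_\alpha(\beta) \in \Phi$, and symmetrically whenever $b = -1$ we get $\alpha + \beta = s_\beta(\alpha) \in \Phi$. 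Thus it suffices to prove that at least one of $a, b$ equals $-1$.

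To pin down $a$ and $b$, I would multiply them: $ab = \frac{4(\alpha \mid \beta)^2}{(\alpha \mid \alpha)(\beta \mid \beta)} = 4\cos^2\theta$, where $\theta$ is the angle between $\alpha$ and $\beta$. By the Cauchy--Schwarz inequality $ab \leq 4$, with equality precisely when $\alpha$ and $\beta$ are linearly dependent. Here dependence is excluded: by axiom (3) we have $\Phi \cap \R\alpha = \{\pm \alpha\}$, so if $\beta \in \R\alpha$ then $\beta \in \{\pm\alpha\}$, and since $(\alpha \mid \beta) < 0$ this forces $\beta = -\alpha$, contrary to hypothesis. Hence $ab < 4$, and as $a, b$ are negative integers their product lies in $\{1, 2, 3\}$.

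Finally I would run through the short list of factorisations of $1, 2, 3$ into two negative integers. In every case $\{a, b\}$ contains $-1$: the product $1$ forces $a = b = -1$; the product $2$ gives $\{a, b\} = \{-1, -2\}$; and the product $3$ gives $\{a, b\} = \{-1, -3\}$. Applying the relevant reflection as above then yields $\alpha + \beta \in \Phi$, completing the argument.

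The only genuine subtlety is the linear-independence step: one must invoke axiom (3) together with the hypothesis $\alpha \neq -\beta$ to rule out the equality case of Cauchy--Schwarz, since otherwise $ab$ could equal $4$ (with $a = b = -2$, the case $\beta = -\alpha$) and neither coordinate would be $-1$. Everything else is the standard finite check on the possible integer products.
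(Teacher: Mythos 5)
Your proof is correct and takes essentially the same route as the paper's source: the paper gives no argument of its own here, citing Bourbaki (Ch.~VI, \S 1, no.~3, Cor.\ to Theorem~1), and Bourbaki's proof rests on precisely your observation that the two negative Cartan integers $a,b$ satisfy $ab = 4\cos^2\theta \in \{1,2,3\}$ once linear dependence (i.e.\ $\beta = -\alpha$, excluded by axiom (3) and the hypothesis) is ruled out, so one of $a,b$ equals $-1$ and the corresponding reflection yields $\alpha + \beta \in \Phi$. You have also correctly identified and closed the only genuine subtlety, the Cauchy--Schwarz equality case, so nothing is missing.
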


 \section{Characterization of quasi-Coxeter elements: Proof of Theorem~\ref{thm:GenGroupLattice}}\label{Character}

\begin{proof}[{\bf Proof of Theorem~\ref{thm:GenGroupLattice}}]
The equivalence of $(a)$ and $(b)$ (not only for $\Phi$ simply laced) is already part of \cite[Proposition 5.10]{BGRW16}. Therefore it remains to prove the equivalence of $(b)$ and $(c)$.

We first show that $(b)$ implies $(c)$.
So assume $(b)$ and let $R:= \{\alpha_1, \ldots, \alpha_n\}$, and $t_i:=s_{\alpha_i}$, $1\leq i\leq k$. Let $T_{\Phi'}$ be the set of reflections in $W_{\Phi'}$. By \cite[Corollary 3.11 (ii)]{Dye90}, we have $T_{\Phi'}=\{wt_i w^{-1}~|~1\leq i\leq k, w\in W_{\Phi'}\}$. In particular every root in $\Phi'$ has the form $w(\alpha_i)$ for some $w\in W_{\Phi'}$, $1\leq i\leq k$. Since $W_{\Phi'}=\left\langle t_1,\dots, t_k\right\rangle$, we can write $w=t_{i_1}\cdots t_{i_m}$ with $1\leq i_j\leq k$ for each $1\leq j\leq m$. Since $\Phi'$ is crystallographic it follows that $w(\alpha_i)=t_{i_1}\cdots t_{i_m}(\alpha_i)$ is an integral linear combination of the $\alpha_j$'s, hence that $\Phi'\subseteq L(R)$. Since $R\subseteq \Phi'$ we get that $L(R)=L(\Phi')$.

By \cite[Ch. VI, Paragraph 1]{Bou02} there is an isomorphism $\varphi: W_{\Phi'} \stackrel{\sim}{\longrightarrow} W_{(\Phi')^{\vee}}$ with $\varphi(s_{\alpha})=s_{\alpha^{\vee}}$. Thus
$$W_{R^{\vee}}=\langle s_{\alpha_1^{\vee}}, \ldots , s_{\alpha_k^{\vee}} \rangle = \varphi(\langle s_{\alpha_1^{}}, \ldots , s_{\alpha_k^{}} \rangle)=  \varphi(W_{R})=
\varphi(W_{\Phi'}) =W_{(\Phi')^{\vee}}.$$ Using the same argumentation as in the last paragraph (now for $(\Phi')^{\vee}$ and $R^{\vee}$ instead of $\Phi'$ and $R$) we obtain $L((\Phi')^{\vee})=L(R^{\vee})$ as well, which shows $(c)$.

Now assume $(c)$, that is $L(R) = L(\Phi')$ and $L(R^{\vee})=L((\Phi')^{\vee})$. By \cite[Proposition 5.10]{BGRW16} it remains to show that if $\Phi''$ denotes the smallest root subsystem of $\Phi$ containing $R$, then $\Phi''=\Phi'$. Since $R \subseteq \Phi'' \subseteq \Phi'$  we get $L(R) \subseteq L(\Phi'') \subseteq
L(\Phi') = L(R)$, which yields
$L(R)=L(\Phi'') = L(\Phi')$. Let $\gamma \in \Phi'$. It remains to show that $\gamma \in \Phi''$. Since $L(\Phi')=  L(\Phi'')$, we have
$$\gamma = \sum_{i=1}^m \mu_i \beta_i$$
with $\mu_i \in \ZZ$ and $\beta_i \in \Phi''$. As $\beta_i \in \Phi''$ implies $- \beta_i \in \Phi''$, we may assume $\mu_i \in \ZZ_{>0}$. Therefore we can write
$$\gamma = \sum_{i=1}^m \beta_i$$
with $\beta_i \in \Phi''$, and we may assume that $m$ is minimal with that property (note that $\beta_i = \beta_j$ for $i \neq j $ is possible and that $m$ might have changed). We obtain
$$( \gamma \mid \gamma) = \sum_{i=1}^m (\beta_i \mid \beta_i) + \sum_{i \neq j} (\beta_i \mid \beta_j),$$
thus
$$1 = \sum_{i=1}^m \frac{(\beta_i \mid \beta_i)}{(\gamma \mid \gamma)} + \sum_{i \neq j} \frac{(\beta_i \mid \beta_j)}{(\gamma \mid \gamma)}.$$
Assume $\gamma \notin \Phi''$. This implies $m \geq 2$. If the root $\gamma$ is short, then $\sum_{i=1}^m \frac{(\beta_i \mid \beta_i)}{(\gamma \mid \gamma)} \geq 2$, hence $(\beta_i \mid \beta_j)<0$ for some $i \neq j$. By the minimality of $m$ we have $\beta_i \neq -\beta_j$. Therefore $\beta_i + \beta_j \in \Phi''$ by Lemma \ref{le:SumOfRoots}, contradicting the minimality of $m$. Thus $\gamma \in \Phi''$.

If the root $\gamma$ is long, then $\gamma^{\vee}$ is short. Since $L((\Phi')^{\vee})= L(R^{\vee})$, we can argue as before and obtain $\gamma^{\vee} \in (\Phi'')^{\vee}$. Thus $\gamma \in \Phi''$.
\end{proof}

\section{Minimal generating sets consisting of reflections} \label{sec4}

In this section we prove Theorem~\ref{prop:Conjugate}.  Throughout the section we assume as in the last section that $\Phi$ is a crystallographic root system.
We like to recall that in the crystallographic case simple systems are characterized as being set of roots whose pairwise dihedral angles are obtuse
as has been observed by Coxeter.
In \cite[Lemma~1]{DL11} Dyer and Lehrer proved,
if in a crystallographic root system $\Phi$ a subset of roots $R$ is linear independent and if the roots in $R$ have pairwise obtuse dihedral angles, then
$R$ is a simple system of a root subsystem of $\Phi$. Thus Lemma~\ref{Coxeter}
below  follows from Theorem~\ref{thm:GenGroupLattice} and \cite[Lemma~1]{DL11}.
We nevertheless present a short and  self contained proof of Lemma~\ref{Coxeter}.

 \begin{Lemma}\label{Coxeter}
Let $\Phi$ be a crystallographic root system of rank $n$ and $W=W_{\Phi}$. Let $R:= \{ \alpha_1, \ldots, \alpha_{n} \} \subseteq \Phi$  be a set of roots such that
\begin{itemize}
\item[(i)] $(\alpha_i \mid \alpha_j) \leq 0$ for $i \neq j$ and
\item[(ii)]  $W = \langle s_{\alpha_i}~|~ 1 \leq i \leq n\rangle$.
\end{itemize}
Then 	$R$ is a simple system for $\Phi$.
\end{Lemma}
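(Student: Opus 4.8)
The plan is to reduce the statement to a sign--coherence property of the coordinates of roots in the ``basis'' $R$, and then to establish that property by a minimal--counterexample argument that splits according to root lengths.

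First I would feed the hypotheses into Theorem~\ref{thm:GenGroupLattice}. Condition (ii) says $W_R = W = W_\Phi$, so taking $\Phi' = \Phi$ the equivalent statement $(c)$ gives $L(R)=L(\Phi)$ and $L(R^\vee)=L(\Phi^\vee)$. Since $R$ has exactly $n=\rk\Phi$ elements and generates the rank--$n$ lattice $L(\Phi)$, it is a $\Z$--basis of $L(\Phi)$; likewise $R^\vee$ is a $\Z$--basis of $L(\Phi^\vee)$. In particular $R$ is a basis of $V$, and each $\gamma\in\Phi$ has \emph{integral} coordinates $\gamma=\sum_i c_i\alpha_i$. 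By the description of simple systems recalled in Section~\ref{sec2}, it then remains to prove that no root is \emph{mixed}, i.e.\ that the $c_i$ are never simultaneously of both signs. (One may assume $\Phi$ irreducible, since the hypotheses pass to each irreducible component and any root which is a difference of two roots lies in a single component.)

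For the dichotomy I would argue by contradiction. Assuming a mixed root exists, I choose one, $\gamma=\sum_i c_i\alpha_i$, minimising the positive height $h(\gamma):=\sum_i\max(c_i,0)$. Writing $\gamma=\beta_+-\beta_-$ with $\beta_+$ (resp.\ $\beta_-$) the positive (resp.\ negated negative) part, the obtuseness (i) makes the cross terms between the disjoint supports nonpositive, so $(\gamma\mid\beta_+)\ge(\beta_+\mid\beta_+)>0$; hence $(\gamma\mid\alpha_i)>0$ for some $i$ with $c_i>0$. Then Lemma~\ref{le:SumOfRoots}, applied to $\gamma$ and $-\alpha_i$, gives $\gamma-\alpha_i\in\Phi$. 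This root keeps all the negative coordinates of $\gamma$, so it is not positive, and it has strictly smaller height, so by minimality it cannot be mixed; therefore it is nonpositive. That forces $\gamma$ to have a unique positive coordinate, equal to $1$, so that $\gamma=\alpha_i-\beta$ with $\beta:=\alpha_i-\gamma$ a positive root and $(\alpha_i\mid\beta)\le 0$.

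The final step closes the argument using lengths. From $\gamma=\alpha_i-\beta$ and $(\alpha_i\mid\beta)\le 0$ we obtain $(\gamma\mid\gamma)\ge(\alpha_i\mid\alpha_i)+(\beta\mid\beta)$. If $\gamma$ is short this is absurd, since then the right--hand side is at least $2(\gamma\mid\gamma)$. If $\gamma$ is long, the same inequality forces $\alpha_i$ to be short; but then the coordinate of $\gamma^\vee$ along $\alpha_i^\vee$ equals $c_i(\alpha_i\mid\alpha_i)/(\gamma\mid\gamma)=(\alpha_i\mid\alpha_i)/(\gamma\mid\gamma)\in\{\tfrac12,\tfrac13\}$, contradicting $\gamma^\vee\in L(\Phi^\vee)=\spanz(R^\vee)$. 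Hence no mixed root exists and $R$ is a simple system. I expect the long--root case to be the main obstacle: the elementary length--and--angle estimate disposes of short roots (and of the simply laced situation entirely), but long roots genuinely require the second half of Theorem~\ref{thm:GenGroupLattice}, namely that $R^\vee$ already spans the full coroot lattice.
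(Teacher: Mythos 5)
Your argument is correct, but it follows a genuinely different route from the one in the paper. The paper's proof is geometric: after using Theorem~\ref{thm:GenGroupLattice} only to get linear independence of $R$, it considers the cone $C$ cut out by the hyperplanes $H_{\alpha_i}$, observes that obtuseness plus crystallography forces $(\alpha_i\mid\alpha_j)$ to realize the canonical bilinear form of a Coxeter matrix, identifies $W$ with the geometric representation of $(W(M),R)$, and invokes Tits's theorem (\cite[V, 4.4, Theorem~1]{Bou02}) to rule out $H_\alpha\cap C\neq\emptyset$, so that $C$ is a fundamental cone and $R$ a simple system. You instead prove sign--coherence of the $R$--coordinates of all roots directly, by descent on the positive height of a putative mixed root, using Lemma~\ref{le:SumOfRoots} and splitting on root lengths; this is much closer in spirit to the proof of $(c)\Rightarrow(b)$ in Theorem~\ref{thm:GenGroupLattice} and to \cite[Lemma~1]{DL11}, which the paper explicitly mentions as an alternative source for this lemma. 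Your approach uses the full strength of Theorem~\ref{thm:GenGroupLattice} (the coroot lattice equality is genuinely needed in your long--root case, as you correctly point out), whereas the paper needs only the basis statement but imports the machinery of geometric representations from Bourbaki; in exchange your proof is elementary and self--contained modulo Lemma~\ref{le:SumOfRoots} and the fact that the squared--length ratio in an irreducible crystallographic system is $2$ or $3$. All the individual steps check out: the cross--term estimate $(\gamma\mid\beta_+)\geq(\beta_+\mid\beta_+)>0$, the applicability of Lemma~\ref{le:SumOfRoots} (you need $\gamma\neq\alpha_i$, which holds since $\gamma$ is mixed), the forcing of a unique positive coordinate equal to $1$, and the non--integrality of the $\alpha_i^\vee$--coordinate of $\gamma^\vee$ in the long case. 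The reduction to the irreducible case is stated tersely but is standard and valid.
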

\begin{proof}
We follow the ideas of the  proof given in \cite[Proposition 4.3]{Doe93}. By Theorem~\ref*{thm:GenGroupLattice} and (ii) we obtain that $R$ is  a basis of $V$.
Put
$$
C :=  \{ x \in V \mid (x \mid \alpha_i) > 0~\text{for all }i \in \{1, \ldots ,n\} \}.
$$

We claim that $C$ is a fundamental cone of $(W,S)$. Therefore we need to show that for every $\alpha \in \Phi$
either $(x \mid \alpha) > 0 $ for all $x \in C$  or $(x \mid \alpha) < 0 $ for all $x \in C$ . Assume there are $\alpha \in \Phi$,
$x_1, x_2 \in C$ such that $(x_1 \mid \alpha) < 0$ and $(x_2 \mid \alpha) > 0$. As $C$ is convex and $(\cdot \mid \alpha)$ continuous, we get $H_\alpha \cap C \neq \emptyset$.

Next we derive a contradiction to that fact.
As the roots in $R$ are pairwise obtuse and as $\Phi$ is crystallographic we have
$$
\left(\frac{\alpha_i}{|\alpha_i|}~ \Big\vert ~\frac{\alpha_j}{|\alpha_j|} \right) = - \cos \left(\frac{\pi}{m_{ij}}\right)
$$

where $m_{ij}$ is the order of $s_{\alpha_{i}} s_{\alpha_{j}} $ for $1 \leq i < j \leq n$ \cite[VI, 1.3, p. 161]{Bou02}.
Thus, $M = (m_{ij})$ is a Coxeter matrix and $W =\langle s_{\alpha_i}~|~ 1 \leq i \leq n\rangle$ is  geometric representation of the Coxeter system $(W(M), R)$ with Coxeter matrix $M$ by
\cite[V, 4.3]{Bou02}. As $H_\alpha \cap C \neq \emptyset$ implies $s_\alpha(C) \cap C \neq \emptyset$, we  get  $s_\alpha = \idop$
by \cite[V, 4.4, Theorem 1]{Bou02}, which is not possible.

This shows that $C$ is a fundamental cone and $R$ a simple system, see \cite[VI, 1.5, Theorem~2]{Bou02}.
\end{proof}

Notice that Lemma~\ref{Coxeter} implies 
that the 
diagram that Carter \cite{Carter} associates to an element of the Coxeter group $W$ has to contain a circle in the case that the element is
a quasi-Coxeter element that is not a Coxeter element, see \cite[Lemma~19]{Carter}.
\bigskip\\

\begin{proof}[{\bf Proof of Theorem~\ref{prop:Conjugate}}]
Let $\Delta:= \{\alpha_1, \ldots , \alpha_n\}$ be a simple system for $\Phi$ such that  $P = \langle s_{\alpha_1}, \ldots , s_{\alpha_{n-1}} \rangle$ and
	let $t \in T$ such that $W = \langle P, t\rangle$.
	As  $\Delta_P := \{\alpha_1, \ldots, \alpha_{n-1}\}$ is a simple system for a subsystem of $\Phi$, the dihedral angles between these roots are pairwise obtuse.

	 Let $\beta \in \Phi$ such that $s_\beta = t$. Further let $E$ be the cone of $V$ that is cut out by the hyperplanes $H_\alpha$ with $\alpha \in \Delta_P \cup \{\beta\}$. Then
$$
E = \{x \in V \mid (x \mid \alpha) >0 ~\text{for all}~ \alpha \in \Delta_P \cup \{\beta\} \}
$$
contains a fundamental cone.

	 Assume that $(\alpha_j \mid \beta) > 0$ for some $1 \leq j \leq n-1$. Then
	$$(\alpha_j \mid s_{\alpha_j}(\beta)) = (s_{\alpha_j}(\alpha_j) \mid \beta) = (-\alpha_j \mid \beta) = -(\alpha_j \mid \beta)  < 0$$
	and we replace $\beta$ by 	$s_{\alpha_j}(\beta)$.

	We claim that  the cone $F$ that is cut out by the new hyperplanes is contained in  $E$.  Let $x \in F$, that is $(x \mid \alpha_i) > 0$ for $1 \leq i \leq n-1$ and $(x \mid s_{\alpha_j}(\beta)) > 0$.
	Then $$(x \mid \beta) = (x \mid s_{\alpha_j}(s_{\alpha_j}(\beta)))  = (x \mid s_{\alpha_j}(\beta))  - \frac{2(s_{\alpha_j}(\beta) \mid \alpha_j) (x \mid \alpha_j)}{(\alpha_j \mid \alpha_j)} > 0$$
	as $(x \mid s_{\alpha_j}(\beta)) > 0$ and $(s_{\alpha_j}(\beta) \mid \alpha_j) < 0$, but $(x \mid \alpha_j) > 0$. Thus $F$ is contained in $E$.

    Next we show that this containment is proper.
    As $\Delta_P \cup \{\beta\}$ is linear independent by Theorem~\ref{thm:GenGroupLattice}, it follows that
$M:= \{ x\in V \mid (x \mid \alpha_i) > 0 ~\mbox{for}~ 1 \leq i \leq n-1\} \cap H_\beta \neq \emptyset $. Let $y \in M$. Then
    $$(y \mid s_{\alpha_j}(\beta)) = (s_{\alpha_j}(y)\mid \beta) = (y\mid \beta) -
    \frac{2(y\mid \alpha_j)(\alpha_j\mid \beta)}{(\alpha_j \mid \alpha_j)} <0$$
    as $(y\mid \beta) = 0$ and $(y\mid \alpha_j),(\alpha_j\mid \beta) >0$.
    Thus $y$ is in the closure of $E$, but not in the closure of $F$, which implies
    that $E \neq F$ and that therefore $F$ is a proper subset of $E$.

	As every sequence of cones $E_1 \supset E_2 \supset E_3 \cdots $ that are cut out
    by hyperplanes $H_\alpha$ with $\alpha \in \Phi$ is of finite length, this process will stop after finitely many steps. The set of roots that will be obtained
    is such that the pairwise dihedral angles between the roots are obtuse.
  Then Lemma~\ref{Coxeter} implies that the related cone is a fundamental
	cone and that the obtained set of roots $\{\alpha_1, \ldots, \alpha_{n-1}, \gamma\}$ is a simple system for $\Phi$. We got $\gamma$ from $\beta$ by conjugating it with elements
	of $P$.  This shows our claim.
\end{proof}
	\bigskip

The following is a consequence of  Theorem~\ref{prop:Conjugate}.

\begin{corollary}\label{Coro:prop:Conjugate}
Let $(W,T)$ be a spherical crystallographic dual Coxeter system of rank $n$, and let $r_1,t_1, \ldots, t_n \in T$ be reflections such that
$$W = \langle r_1, t_2, \ldots ,t_n\rangle = \langle t_1, \ldots ,t_n\rangle.$$
Then $g^{-1}r_1g = t_1$ for some $g \in \langle t_2, \ldots , t_n\rangle$.
\end{corollary}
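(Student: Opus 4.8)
The plan is to recognise this corollary as an immediate application of Theorem~\ref{prop:Conjugate}, once the subgroup $P := \langle t_2, \ldots, t_n\rangle$ has been identified as a parabolic subgroup of rank $n-1$. First I would set $P := \langle t_2, \ldots, t_n\rangle$ and rephrase the hypotheses: the two equalities $W = \langle r_1, t_2, \ldots, t_n\rangle$ and $W = \langle t_1, \ldots, t_n\rangle$ say precisely that $\langle P, r_1\rangle = W = \langle P, t_1\rangle$. Thus both $r_1$ and $t_1$ are reflections in $T$ which, together with $P$, generate all of $W$, and the goal becomes to show that $r_1$ and $t_1$ are conjugate under $P$.

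The key preliminary step is to verify that $P$ is a parabolic subgroup of $(W,T)$ of rank $n-1$. Since $t_1, \ldots, t_n$ are $n$ reflections in $T$ generating the rank-$n$ group $W$, after relabelling so that $t_1$ takes the role of the ``last'' generator, \cite[Theorem~1.5]{BGRW16} gives that $P = \langle t_2, \ldots, t_n\rangle$ is a parabolic subgroup of rank $n-1$. Alternatively, staying entirely within the excerpt: a reflection subgroup generated by $n-1$ reflections has rank at most $n-1$, whereas $\langle P, t_1\rangle = W$ forces its rank to be at least $n-1$, because adjoining a single reflection can raise the rank by at most one; the resulting rank-$(n-1)$ subgroup $P$ is then parabolic by Theorem~\ref{prop:CharParSub}, applied with the reflection $t_1$.

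With $P$ a rank-$(n-1)$ parabolic subgroup and both $r_1, t_1 \in T$ satisfying $\langle P, \cdot\rangle = W$, Theorem~\ref{prop:Conjugate} applies directly: the reflections $t \in T$ with $\langle P, t\rangle = W$ form a single $P$-conjugacy orbit, so $r_1$ and $t_1$ lie in the same orbit. Writing $r_1 = g t_1 g^{-1}$ for a suitable $g \in P = \langle t_2, \ldots, t_n\rangle$ and rearranging yields $g^{-1} r_1 g = t_1$, which is the asserted identity. I expect no genuine obstacle here; the only point demanding care is the bookkeeping that removing $t_1$ leaves a subgroup of rank exactly $n-1$ rather than one of smaller rank, and this is exactly what \cite[Theorem~1.5]{BGRW16} (or the rank count together with Theorem~\ref{prop:CharParSub}) guarantees. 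Everything else is a direct invocation of Theorem~\ref{prop:Conjugate}.
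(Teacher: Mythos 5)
Your proposal is correct and follows essentially the same route as the paper: identify $P=\langle t_2,\ldots,t_n\rangle$ as a rank-$(n-1)$ parabolic subgroup via Theorem~\ref{prop:CharParSub} and then apply Theorem~\ref{prop:Conjugate} to place $r_1$ and $t_1$ in the same $P$-orbit. Your explicit rank count for $P$ is a small extra check the paper leaves implicit, but the argument is the same.
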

\begin{proof} Theorem~\ref{prop:CharParSub} implies that $P:= \langle t_2, \ldots , t_n\rangle$ is a parabolic subgroup of $(W,T)$. The statement follows with
Theorem~\ref{prop:Conjugate}.
\end{proof}

\end{document}